\font\smallit=cmti10
\newtheorem{thm}{\bfseries Theorem}
\newtheorem{remark}[thm]{\bfseries Remark}    
\newtheorem{cor}[thm]{\bfseries Corollary}     
\newtheorem{cl}[thm]{\bfseries Claim}
\newtheorem{obs}[thm]{\bfseries Observation}
\newtheorem{conj}[thm]{\bfseries Conjecture}
\newtheorem{const}[thm]{\bfseries Construction}
\renewcommand\section{\@startsection {section}{1}{\z@}
{-30pt \@plus -1ex \@minus -.2ex}
{2.3ex \@plus.2ex}
{\normalfont\normalsize\bfseries}}
\renewcommand\subsection{\@startsection{subsection}{2}{\z@}
{-3.25ex\@plus -1ex \@minus -.2ex}
{1.5ex \@plus .2ex}
{\normalfont\normalsize\bfseries}}
\renewcommand{\@seccntformat}[1]{\csname the#1\endcsname. }
\newcommand\bovermat[2]{%
  \makebox[0pt][l]{$\smash{\overbrace{\phantom{%
    \begin{matrix}#2\end{matrix}}}^{\text{#1}}}$}#2}
\newcommand\bundermat[2]{%
  \makebox[0pt][l]{$\smash{\underbrace{\phantom{%
    \begin{matrix}#2\end{matrix}}}_{\text{#1}}}$}#2}
\newcommand\lep{\cellcolor{black!10}}
\newcommand\lepc{\cellcolor{black!20}}
\newcommand\lepcs{\cellcolor{black!30}}
\newcommand\lepcso{\cellcolor{black!40}}
\def\zo {$0\text{-}1$\ }
\def\turn{\text{turn}}
\def\set#1{\{#1\}}
\def\jl{$\righthalfcap$}
\def\lj{$\lefthalfcup$}
\def\szumma{st_{\Sigma}}
\begin{document}


\begin{center}
\uppercase{\bf On the staircases of Gy\'arf\'as}
\vskip 20pt
{\bf J\'anos Cs\'anyi}
     \\
{\smallit University of Szeged, Bolyai Institute
          }\\
{\tt csanyi1990@gmail.com}\\
\vskip 10pt
{\bf P\'eter Hajnal}
     \\
{\smallit University of Szeged, Bolyai Institute,
          and\\
          Alfr\'ed R\'enyi Institute of Mathematics,
          Hungarian Academy of Sciences}\\ 
{\tt hajnal@math.u-szeged.hu}\\
\vskip 10pt
{\bf G\'abor V.~Nagy}
     \\
{\smallit University of Szeged, Bolyai Institute
          }\\
{\tt ngaba@math.u-szeged.hu}\\
\end{center}

\vskip 30pt



\centerline{\bf Abstract}
Gy\'arf\'as in \cite{Gy} investigated a geometric Ramsey problem
on convex, separated, balanced, geometric $K_{n,n}$.
This led to appealing
extremal problem on square \zo matrices.
Gy\'arf\'as conjectured that any \zo matrix of size $n\times n$ 
has a staircase of size $n-1$.

We introduce the non-symmetric version of Gy\'arf\'as' problem.
We give upper bounds and in certain range matching lower bound on
the corresponding extremal function.
In the square/balanced case we improve the
$(4/5+\epsilon)n$ lower bound of Cai, Gy\'arf\'as et al.~\cite{Gy2}
to $5n/6-7/12$.
We settle the problem when instead of considering
maximum staircases we deal with the sum of the
size of the longest $0$- and $1$-staircases.

\noindent

\section{
                Introduction
}

It is well-known
(an early remark of Erd\H{o}s and Rado, nowadays a standard
introductory exercise in graph theory courses)
that in any red/blue edge coloring of the
complete graph a monochromatic spanning tree is guaranteed.
It was in 1998 when K\'arolyi, Pach and T\'oth \cite{KPT}
proved the geometric version
of this fact: We take $n$ independent
points on the plane, connect all the pairs with a line segment,
and color them arbitrarily with red and blue colors.
Then a non-crossing spanning tree is guaranteed.

The bipartite case of the original
graph theoretical question is easy:
If we red/blue color the edges of $K_{n,n}$
(the balanced complete bipartite graph) then
the largest monochromatic tree has at least $n$ vertices if $n$ is even
and $n+1$ if $n$ is odd.
Furthermore this bound is optimal, certain coloring
doesn't contain a larger
monochromatic tree.

The beautiful theorem
from \cite{KPT} led Gy\'arf\'as to consider the
geometric problem when the underlying graph is
a complete bipartite graph:
Take any $2n$ points in convex position on the plane.
Assume that a line separates them into two groups
of $n$ points.
Connect all crossing pairs with a line segment
and color them arbitrarily with red/blue colors.
What is the size of the largest monochromatic tree
that you can guarantee?

The above/graph version of the
basic question can be easily transformed to 
a matrix version as it was done in \cite{Gy2}.

Let $M$ be a \zo matrix. A $0$-staircase is a sequence $\set{s_i}_{i=1}^\ell$ of
zeroes in $M$ such that $s_{i+1}$ is either to the right of $s_i$ in the same row,
or it is below $s_i$ in the same column ($i=1,2,\ldots,\ell-1$).
(We emphasize that $s_i$ and $s_{i+1}$ do not have to be neighbouring elements in $M$.)
A $1$-staircase is defined similarly on ones of $M$. $M$ is a homogeneous
staircase in $M$ iff it is either a $0$- or a $1$-staircase. The length
of a homogeneous staircase $S$ is the number of elements in
it, we denote it as $|S|$. $S$ can be viewed as $|S|-1$ steps, where the steps can be right steps or down steps,
formed by the consecutive elements of $S$.
An element of $S$ is a turning point, if it is involved in two steps with
different directions.
$\turn(S)$ denotes the number of turns in $S$.
For example $\turn (S)=0$ iff $S$ includes identical elements from a row, 
or from a column.
$\turn(S)=1$ iff the steps of $S$ are some horizontal (staying in the same row)
steps followed by some vertical steps or vice versa.
In the first case we say that $S$ is a \jl-staircase.
The first elements forming the horizontal steps will be referred as the
horizontal hand of our \jl-staircase.
The last block of elements that form the vertical steps 
will be referred as the
vertical hand of our \jl-staircase.
In the second case
we say that $S$ is a \lj-staircase.
We can use the notation of horizontal/vertical 
hands in this case too.

\begin{conj}[Gy\'arf\'as' Conjecture]
Any \zo matrix of size $n\times n$
contains a staircase of size $n-1$.
\end{conj}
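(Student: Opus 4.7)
The approach is to proceed by induction on $n$, augmented by a detailed case analysis based on the majority colors appearing in distinguished rows and columns. The base cases $n \leq 2$ are immediate. For the inductive step, given an $n \times n$ matrix $M$, I would attempt to build a monochromatic \jl- or \lj-staircase of length at least $n-1$ by combining a long horizontal hand in one row with a long vertical hand in one column, exploiting the large number of candidate hands available in the first row and the first column.

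Concretely, consider row~$1$. By the pigeonhole principle some color, say $0$, appears in at least $\lceil n/2\rceil$ of its entries; denote the columns where zeros occur by $c_1 < c_2 < \cdots < c_k$ with $k \geq \lceil n/2 \rceil$. For each index $j$, let $z_j$ be the number of zeros strictly below row~$1$ in column $c_j$. Turning at column $c_j$ produces a $0$-staircase of length $j + z_j$. Symmetric constructions start from column~$1$, or build a $1$-staircase, or use an \lj-shape; altogether one obtains a large family of candidate staircases indexed by the choice of starting row/column, turning point, and color. The plan is to combine these families via an averaging argument, together with an extremal analysis of how $0$s and $1$s can be distributed, to conclude that some candidate reaches length $n-1$. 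If pure averaging falls short, one can iterate: apply induction to the submatrix obtained after removing the row and column already used, and glue the resulting staircase with a short ``bridge'' of extra same-color entries.

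The main obstacle, and the reason Gy\'arf\'as' conjecture has remained open for more than a decade, is that adversarial matrices can distribute $0$s and $1$s so that every natural horizontal hand is paired with a short complementary vertical hand. The present paper pushes the lower bound only to $5n/6 - 7/12$, and this level of progress is reached precisely by sharpening the kind of density-and-averaging analysis just outlined. I therefore expect the plan above to saturate at essentially the same barrier. Closing the gap from $5n/6$ to $n-1$ almost certainly demands a genuinely new structural idea---perhaps a potential function tracking several partial staircases simultaneously, a charging scheme that redistributes the cost of turns across colors, or the identification of an unavoidable rigid sub-configuration inside every near-extremal matrix---rather than a further refinement of the current toolkit.
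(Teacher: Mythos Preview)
The statement you were asked to prove is labelled \emph{Conjecture} in the paper, and the paper does not prove it; indeed the main result of the paper is only the weaker lower bound $st(M)\ge \tfrac{5}{6}n-\tfrac{7}{12}$ (Theorem~2). You clearly recognise this yourself in the final paragraph of your proposal, so what you have written is not a proof but an honest assessment that the averaging/pigeonhole framework you sketch is expected to stall near the $5n/6$ barrier. That assessment is accurate and matches the state of the art reflected in the paper.

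For comparison, the paper's argument for the $5n/6$ bound is not the induction-plus-pigeonhole scheme you outline. Instead it fixes four anchor positions $a_1,a_2,a_3,a_4$ determined by the last $0$ in the first row and the first $0$ in the last column, builds the four \jl-staircases $S_1,\dots,S_4$ centered at these points, repeats the construction from the opposite corner to get $S_1',\dots,S_4'$, and then (after a case split) adds two further staircases $S_5,S_6$ through a carefully chosen column. A weighted sum of the lengths of these ten or twelve explicit staircases is shown to be at least $10n-7$, and the longest one gives the bound. Your plan of averaging over turning columns $c_j$ and tracking the counts $z_j$ is in the same spirit---both are averaging arguments over a finite family of candidate staircases---but the paper's choice of family is more rigid and tied to the corner structure of the matrix, which is what extracts the extra $n/30$ over Gy\'arf\'as' original $4n/5$. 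Neither approach, as you correctly note, is currently known to reach $n-1$.
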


Let $st(M)$ be the maximum among the lengths of the
homogeneous staircases of $M$.
Let $st_0(M)$ (resp. $st_1(M)$) be the maximum among the lengths of the
homogeneous $0$-staircases (resp.~$1$-staircases) of $M$.
Hence $st(M)=\max\set{st_0(M),st_1(M)}$.

We define a symmetric and an asymmetric version
of the original Ramsey-type question.
\[
st(n)=\min\set{st(M): M\in\set{0,1}^{n\times n}},
\]
\[
st(n,N)=\min\set{st(M): M\in\set{0,1}^{n\times N}}.
\]
In the asymmetric case it is obvious that $st(n,N)=st(N,n)$.
We always assume that $N\geq n$.

The Gy\'arf\'as' conjecture states that $st(n)\geq n-1$.
Since \cite{Gy2} contains an easy example that shows 
$st(n)\leq n-1$ (assuming $n>1$) we can state
Gy\'arf\'as' conjecture as $st(n)=n-1$ if $n>1$.

Instead of $st(M)=\max\set{st_0(M),st_1(M)}$ one can
investigate
\[
\szumma(M)=st_0(M)+st_1(M).
\]

It is natural to introduce
\[
\szumma(n)=\min\set{\szumma(M): M\in\set{0,1}^{n\times n}},
\]
\[
\szumma(n,N)=\min\set{\szumma(M): M\in\set{0,1}^{n\times N}}.
\]

Cai, Grindstaff, Gy\'arf\'as and Shull stated a conjecture
concerning the function
$\szumma(n)$ (see \cite{Gy0}), that turned out to be false (see
the final remark in \cite{Gy2}, the journal version of \cite{Gy0}).
In section 2 we determine the exact value
of 
$\szumma(n, N)$.
In section 3 we deal with $st(n,N)$.
We present two constructions (hence we give upper bounds
on $st(n,N)$). We conjecture that our matrices gives the right value of $st(n,N)$.
Our conjecture is consistent with Gy\'arf\'as' conjecture.
We will be able to prove our conjecture in certain range.
Unfortunately the case of square and ``near-square''
matrices is still unsolved.

In the final section we give
a significant improvement of the 
$\frac{4}{5}n$ bound in \cite{Gy} (as opposed to the one
in \cite{Gy2}). 
We are going to prove the following theorem.

\begin{thm}
For any $M\in\set{0,1}^{n\times n}$ 
\[
st(M)\geq\frac{5}{6}n-\frac{7}{12}.
\]
\end{thm}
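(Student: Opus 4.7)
The plan is to argue by contradiction: assume $s := st(M) < \frac{5}{6}n - \frac{7}{12}$, and derive a contradiction. As an elementary start, a full row or column of a single value would yield a flat staircase of length exceeding $s$, so the zero-count $a_i$ of each row $i$ and the zero-count $Z(j)$ of each column $j$ must lie in $[n-s, s]$ (and the same for the one-counts $b_i = n - a_i$ and $O(j) = n - Z(j)$).

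The main tool is a pair of hook inequalities. Letting the zeros of row $i$ appear in columns $z_1 < \cdots < z_{a_i}$, the \jl-staircase formed by the prefix $(i, z_1), \ldots, (i, z_k)$ continued downward in column $z_k$ gives
\[
k + d^\downarrow_{z_k}(i) \le s,
\]
and symmetrically $(a_i - k + 1) + d^\uparrow_{z_k}(i) \le s$, where $d^\downarrow_j(i)$ and $d^\uparrow_j(i)$ count the zeros strictly below and above row $i$ in column $j$. Summing these two for the same $k$ and using the identity $d^\downarrow_{z_k}(i) + d^\uparrow_{z_k}(i) = Z(z_k) - 1$ at a zero cell yields the crisp inequality $a_i + Z(j) \le 2s$ for every zero $(i, j)$; by color symmetry, $b_i + O(j) \le 2s$ for every one. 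Aggregating over all zero entries gives $\sum_i a_i^2 + \sum_j Z(j)^2 \le 2sT$, where $T$ is the total number of zeros.

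From here I would refine the aggregation with two-turn staircase inequalities --- typically, a right-down-right $0$-staircase that couples the profile of row $i$ to that of a chosen lower row $i'$ via a descent of length $d$ in some column $z_k$. Combined with the box confinement $a_i, Z(j) \in [n-s, s]$ and a row-pair case analysis on the extremal zero-profiles, together with the analogous inequalities for the ones, these sharper constraints should reduce to a finite extremal problem whose solution forces $s \ge \frac{5}{6}n - \frac{7}{12}$.

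The hardest part is achieving the constant $\frac{5}{6}$ precisely: naive aggregation of the one-turn hook bound reproduces only $s \ge n/2$, and straightforward double-counting via Cauchy--Schwarz tops out around $\frac{2}{3}n$ or $\frac{3}{4}n$. The improvement over the $\frac{4}{5}n$ bound of \cite{Gy2} must come from an essentially new structural observation --- most likely a careful case analysis that exploits both colors \emph{and} both hook orientations simultaneously and tracks what happens to rows whose zero-count is near the boundary of $[n-s, s]$. The $-\frac{7}{12}$ correction is almost certainly an integrality artifact from ceiling/floor rounding in the worst extremal configuration.
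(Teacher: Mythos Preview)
Your proposal is not a proof: it is an outline whose central step is missing, and you say so yourself. The hook inequality $a_i + Z(j)\le 2s$ at every zero $(i,j)$ is correct and is a clean observation, but as you admit, aggregating it (even with Cauchy--Schwarz and the box constraint $a_i,Z(j)\in[n-s,s]$) does not get past $3n/4$. Everything after ``From here I would refine\ldots'' is speculation: you do not write down any two-turn inequality, any row-pair case split, or any extremal problem, and you give no argument that such a refinement can reach $5/6$. So the proposal has a genuine gap exactly at the point where the content of the theorem lies.

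The paper's proof does not aggregate over rows and columns at all; it is a local, corner-based argument. One fixes the top-right entry $a_1$ (say a $1$), locates the last $0$ in the top row ($a_2$), the first $0$ in the right column ($a_3$), and their ``intersection'' $a_4$; after disposing of the easy case $a_4=0$, one considers the four \jl-staircases $S_1,\dots,S_4$ through these points and proves the exact identity $2s_1+s_2+s_3+s_4=4n-3+x_1+y_0+z_0+w_1$. The same is done at the bottom-left corner, yielding primed quantities. If $s_1+s_a'\ge 2n-2$ one is already done; otherwise a pigeonhole on the first and last rows produces a column whose top entry is $0$ and bottom entry is $\overline a$, and two further staircases $S_5,S_6$ through that column satisfy $|S_5|+|S_6|\ge 2n-1-x_1-x_a'$. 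Adding all three inequalities cancels the $x$-terms and gives
\[
2s_1+s_2+s_3+s_4+2s_1'+s_2'+s_3'+s_4'+|S_5|+|S_6|\ \ge\ 10n-7,
\]
a weighted sum of staircase lengths with total weight $12$, so the maximum is at least $(10n-7)/12=\tfrac{5}{6}n-\tfrac{7}{12}$. In particular the constant $-\tfrac{7}{12}$ is not a rounding artifact; it is exactly $7/12$ coming from the $-3-3-1=-7$ in the identity, divided by the total weight $12$. Your global double-counting framework does not see this structure, and nothing in your sketch suggests how to recover it.
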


\section{
                The exact value of $\szumma(n,N)$
}

We start with an easy construction.

\begin{const}
Let $P^{(n,N)}\in\set{0,1}^{n\times N}$
be the matrix, where we have $P^{(n,N)}_{i,j}=0$ 
iff $i+j\leq\lfloor\frac{n}{2}\rfloor
+1$ or $i+j\geq\lfloor\frac{n}{2}\rfloor+N+1$.
\end{const}

The following two examples help to understand the formalism:

\[
P^{(6,8)}=\begin{pmatrix}
 0 & 0 & 0 & 1 & 1 & 1 & 1 & 1 \\
 0 & 0 & 1 & 1 & 1 & 1 & 1 & 1 \\
 0 & 1 & 1 & 1 & 1 & 1 & 1 & 1 \\
 1 & 1 & 1 & 1 & 1 & 1 & 1 & 0 \\
 1 & 1 & 1 & 1 & 1 & 1 & 0 & 0 \\
 1 & 1 & 1 & 1 & 1 & 0 & 0 & 0 \\
\end{pmatrix},
\qquad
P^{(7,8)}=\begin{pmatrix}
 0 & 0 & 0 & 1 & 1 & 1 & 1 & 1 \\
 0 & 0 & 1 & 1 & 1 & 1 & 1 & 1 \\
 0 & 1 & 1 & 1 & 1 & 1 & 1 & 1 \\
 1 & 1 & 1 & 1 & 1 & 1 & 1 & 0 \\
 1 & 1 & 1 & 1 & 1 & 1 & 0 & 0 \\
 1 & 1 & 1 & 1 & 1 & 0 & 0 & 0 \\
 1 & 1 & 1 & 1 & 0 & 0 & 0 & 0 \\
\end{pmatrix}  
\]
Note that the $0$'s form two triangular regions in our matrix
(in the upper left and the lower right corners).
Because of our convention ($n\leq N$)
there is no $0$-staircase that contains $0$'s from both triangular regions.
Hence the following fact is easy:

\begin{obs} For any $n\leq N$,
\[
st_0(P^{(n,N)})=\left\lceil\frac{n}{2}\right\rceil,
\qquad 
st_1(P^{(n,N)})=N-1,
\] and so
\[\szumma(P^{(n,N)})=\left\lceil\frac{n}{2}\right\rceil+N-1.
\]
\end{obs}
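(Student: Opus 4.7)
The plan is to exploit the fact that the coordinate sum $i+j$ strictly increases along any staircase: if $s_k=(i_k,j_k)$ and $s_{k+1}=(i_{k+1},j_{k+1})$ are two consecutive elements of a staircase, then exactly one of $i_{k+1}-i_k$ and $j_{k+1}-j_k$ is $0$ and the other is a positive integer, so $(i_{k+1}+j_{k+1})-(i_k+j_k)\ge 1$. Consequently, for a staircase of length $\ell$,
\[
(i_\ell+j_\ell)-(i_1+j_1)\ \ge\ \ell-1.
\]
All three claims in the observation follow from this inequality combined with the definition of $P^{(n,N)}$.

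First I will handle $st_1$. Every $1$-entry of $P^{(n,N)}$ satisfies $\lfloor n/2\rfloor+2\le i+j\le\lfloor n/2\rfloor+N$, so the inequality above forces $\ell\le N-1$ for any $1$-staircase. Equality is witnessed by row $\lfloor n/2\rfloor$, whose $1$-entries occupy the columns $2,3,\dots,N$ (this is the horizontal staircase of length $N-1$ visible in the examples).

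For $st_0$, I first argue that no $0$-staircase can contain zeros from both the upper-left triangular block $U=\{(i,j):i+j\le\lfloor n/2\rfloor+1\}$ and the lower-right block $L=\{(i,j):i+j\ge\lfloor n/2\rfloor+N+1\}$. If $s_t\in U$ and $s_{t+1}\in L$ were consecutive, either they share a row (so $i_{t+1}=i_t\le\lfloor n/2\rfloor$, forcing $i_{t+1}+j_{t+1}\le\lfloor n/2\rfloor+N$, contradicting membership in $L$) or they share a column (so $j_{t+1}=j_t\le\lfloor n/2\rfloor$, forcing $i_{t+1}+j_{t+1}\le n+\lfloor n/2\rfloor<\lfloor n/2\rfloor+N+1$ using $N\ge n$, again a contradiction). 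So every $0$-staircase lies entirely in $U$ or entirely in $L$.

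Within $U$ the coordinate sum ranges in $[2,\lfloor n/2\rfloor+1]$, giving $\ell\le\lfloor n/2\rfloor$; within $L$ it ranges in $[\lfloor n/2\rfloor+N+1,n+N]$, giving $\ell\le\lceil n/2\rceil$. The maximum of the two is $\lceil n/2\rceil$, and this is realized (for instance) by the bottom row of $L$, which contains exactly $\lceil n/2\rceil$ zeros forming a horizontal staircase. Combining the two bounds yields the stated value of $\szumma(P^{(n,N)})$. The only real content is the case-analysis ruling out staircases that hop between $U$ and $L$; everything else is an immediate application of the $i+j$ inequality.
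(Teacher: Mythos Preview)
Your proof is correct and follows exactly the line the paper indicates: the paper merely remarks that (because $n\le N$) no $0$-staircase can meet both triangular blocks and then declares the observation ``easy,'' and you have supplied the details via the monotonicity of $i+j$ along a staircase. One trivial quibble: your witnessing row for $st_1=N-1$ is row $\lfloor n/2\rfloor$, which does not exist when $n=1$; in that boundary case use row $1$ (or row $\lceil n/2\rceil$ in general) instead.
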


We are going to prove a matching lower bound on $\szumma(M)$
for any $M\in\set{0,1}^{n\times N}$.

\begin{thm} For any $n\leq N$,
\[
\szumma(n,N)=\left\lceil\frac{n}{2}\right\rceil+N-1.
\]
\label{tetel_osszeg}\end{thm}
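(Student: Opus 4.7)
Since the Observation above supplies the matching upper bound via the construction $P^{(n,N)}$, my task is to establish the lower bound $st_0(M) + st_1(M) \geq \lceil n/2 \rceil + N - 1$ for every $M \in \{0,1\}^{n \times N}$ with $n \leq N$. The plan is to induct on $n + N$, with $n = 1$ as the immediate base case (a single row has $st_0(M) + st_1(M) = N$). For the inductive step I reduce as follows: if $n < N$, remove the last column of $M$ to obtain $M' \in \{0,1\}^{n \times (N-1)}$, to which induction applies and yields $\szumma(M') \geq \lceil n/2 \rceil + N - 2$; if $n = N$, remove the first row to obtain $M'' \in \{0,1\}^{(n-1) \times n}$, yielding $\szumma(M'') \geq \lceil (n-1)/2 \rceil + n - 1$. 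In the square subcase the latter bound already equals $\lceil n/2 \rceil + n - 1$ for even $n$; for odd $n$ the same column-extension argument used in the rectangular subcase applies after transposition.

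Because the trivial inequality $\szumma(M) \geq \szumma(M')$ handles the case when $M'$ is not extremal (i.e.\ when $\szumma(M')$ already exceeds its own target by at least one), the substantive step is the following \emph{Extension Lemma}: whenever $M \in \{0,1\}^{n \times N}$ with $N \geq 2$ and the truncation $M'$ satisfies $\szumma(M') = \lceil n/2 \rceil + N - 2$, one has $\szumma(M) \geq \szumma(M') + 1$. To prove it, I pick a longest $0$-staircase $S_0'$ of $M'$ with terminal cell $(r_0, c_0)$ and a longest $1$-staircase $S_1'$ with terminal $(r_1, c_1)$, and inspect the two values $M[r_0, N]$ and $M[r_1, N]$. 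If $M[r_0, N] = 0$, then $S_0'$ followed by $(r_0, N)$ is a $0$-staircase in $M$ of length $st_0(M') + 1$; symmetrically, if $M[r_1, N] = 1$, we extend $S_1'$. Otherwise we are in the \emph{stuck subcase} $M[r_0, N] = 1$ and $M[r_1, N] = 0$, holding for every admissible choice of longest staircases.

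The stuck subcase is the technical heart of the argument, and the step I expect to be the main obstacle. To resolve it I would study the full sets $R_0, R_1$ of terminal rows of longest $0$- and $1$-staircases of $M'$. Under the stuck hypothesis $R_0 \subseteq \{r : M[r,N] = 1\}$ and $R_1 \subseteq \{r : M[r,N] = 0\}$; combined with the extremality assumption $\szumma(M') = \lceil n/2 \rceil + N - 2$, which should force a very specific structure on $M'$ (close to $P^{(n,N-1)}$), I expect to either exhibit a near-longest staircase of $M'$ whose terminal row has the compatible color in column $N$, or use a long monochromatic run inside column $N$ itself---chained with a cheaper staircase of $M'$---to still produce a staircase in $M$ one longer than $\szumma(M')$. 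Pigeonhole on column $N$ alone is insufficient, so the extremality hypothesis on $M'$ must be used in an essential way.
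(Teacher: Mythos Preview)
Your proposal is not a complete proof: the ``stuck subcase'' of your Extension Lemma---where every longest $0$-staircase of $M'$ terminates in a row carrying a $1$ in column $N$, and every longest $1$-staircase terminates in a row carrying a $0$ there---is left open. You flag it yourself as ``the main obstacle'' and offer only heuristics: that extremality of $M'$ should force it ``close to $P^{(n,N-1)}$'', or that a near-longest staircase or a long monochromatic run in column $N$ will save the day. None of this is carried out. The structural hope is unsupported: extremal matrices for $\szumma$ need not be unique up to symmetry, and classifying them may well be harder than the inequality itself. Without this case the induction simply does not close; and the secondary patch for the square case with $n$ odd (``apply the same column-extension argument after transposition'') presupposes the very lemma you have not proved.

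By contrast, the paper's argument is direct and avoids induction entirely. Look at the first column, take its majority value (say $1$), and let $a$ be the row of the lowest $1$ there; all $1$'s of column $1$---at least $\lceil n/2\rceil$ of them---lie at or above row $a$. Let $L_1$ be the $1$-staircase that runs down the $1$'s of column $1$ to position $(a,1)$ and then right through the $1$'s of row $a$, and let $L_2$ be the $0$-staircase consisting of the $0$'s of row $a$. Row $a$ contributes all $N$ of its entries to $L_1\cup L_2$, and column $1$ contributes at least $\lceil n/2\rceil-1$ further $1$'s to $L_1$ (the corner $(a,1)$ being shared), so $|L_1|+|L_2|\ge \lceil n/2\rceil+N-1$. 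Two explicit staircases and one pigeonhole on a single column suffice; your inductive machinery, even if it could be completed, is reaching for structure the problem does not need.
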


\begin{proof}
Let $M\in\set{0,1}^{n\times N}$ arbitrary.
Take the elements of the first column, that give the majority of this
column. We can assume that they have the common value $1$.
Let $a$ be the position of the lowest $1$ in the first column.
The number of $1$'s at and above $a$ is at least $\lceil\frac{n}{2}\rceil$.
Let $S_1$ the \lj-staircase centered at $a$. Let $L_2$ 
the staircase formed by the $0$'s in the row of $a$.
It is easy to see that
\[
|L_1|+|L_2|\geq \left\lceil\frac{n}{2}\right\rceil+N-1.
\]
This observation proves the theorem.
\end{proof}

\section{
                An upper bound for asymmetric matrices
}

In this section we give an upper bound on $st(n,N)$. We will easily see that 
our bound is the truth for ``wide'' enough matrices. (The sharpness of the bound
has been validated by computer for a few additional dimensions, too,
but only for small $n$ and $N$ values.) We exhibit two constructions. They correspond
to two different ranges of shapes.

\begin{const} For $N\geq\lfloor\frac{5}{2}n\rfloor-1$, we define the matrix $Q^{(n,N)}$ as follows:
Let $Q^{(n,N)}_{i,j}=1$ iff one of the following three possibilities holds:
\begin{itemize}
\item[(1)]
$i\leq \left\lceil \frac n2\right\rceil$ and $i+j\leq \left\lceil \frac n2\right\rceil+1$,
\item[(2)]
$i\leq \left\lceil \frac n2\right\rceil$ and $\left\lceil \frac n2\right\rceil +\left\lfloor\frac{\lceil n/2\rceil+N-1}2\right\rfloor +1< i+j$,
\item[(3)]
$i>\left\lceil \frac n2\right\rceil$ and $\left\lceil\frac n2\right\rceil+N-\left\lfloor\frac{\lceil n/2\rceil+N-1}2\right\rfloor < i+j\leq\left\lceil\frac n2\right\rceil+N$.
\end{itemize}
\end{const}
The reader is advised to check the example below to understand the structure of matrix $Q^{(n,N)}$:
The first $\lceil n/2\rceil$ rows of $Q^{(n,N)}$ forms an ``isosceles right triangle'' of 1's
in the upper left corner (the length of the legs is $\lceil n/2\rceil$), followed by a ``parallelogram'' of 0's
with horizontal side of length $\left\lfloor\frac{\lceil n/2\rceil+N-1}2\right\rfloor$, and the rest of this upper region is filled with 1's.
The last $\lfloor n/2\rfloor$ rows are defined analogously: the triangle in the lower right corner has leg length $\lfloor n/2\rfloor$,
and the parallelogram of 1's has the same width as the parallelogram of 0's above.
\setcounter{MaxMatrixCols}{30}
\[
Q^{(6,19)}=
\begin{matrix}
\phantom{0}\\
\phantom{0}\\
\begin{pmatrix}
\bovermat{$\left\lceil \frac n2\right\rceil$}{1&1&1}&\bovermat {$\left\lfloor\frac{\lceil n/2\rceil+N-1}2\right\rfloor$}{0&0&0&0&0&0&0&0&0&0}&1&1&1&1&1&1\\
1&1&0&0&0&0&0&0&0&0&0&0&1&1&1&1&1&1&1\\
1&0&0&0&0&0&0&0&0&0&0&1&1&1&1&1&1&1&1\\
0&0&0&0&0&0&0&0&1&1&1&1&1&1&1&1&1&1&0\\
0&0&0&0&0&0&0&1&1&1&1&1&1&1&1&1&1&0&0\\
0&0&0&0&0&0&\bundermat {$\left\lfloor\frac{\lceil n/2\rceil+N-1}2\right\rfloor$}{1&1&1&1&1&1&1&1&1&1}&\bundermat{$\left\lfloor \frac n2\right\rfloor$}{0&0&0}\\
\end{pmatrix}\\
\phantom{0}\\
\phantom{0}\\
\end{matrix}
\begin{matrix}
\left.\begin{matrix}
{}\\{}\\{}
\end{matrix}\right\}\left\lceil \frac n2\right\rceil\\
\left.\begin{matrix}
{}\\{}\\{}
\end{matrix}\right\}\left\lfloor \frac n2\right\rfloor
\end{matrix}
\]
\begin{cl} For all $N\geq\lfloor\frac{5}{2}n\rfloor-1$,
\[
st(Q^{(n,N)})=\left\lceil\frac{\lceil n/2\rceil+N-1}2\right\rceil.
\]
\label{konstrukcio1}\end{cl}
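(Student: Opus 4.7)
The plan is to establish the claim via matching lower and upper bounds. Let $h=\lceil n/2\rceil$ and $w=\lfloor(h+N-1)/2\rfloor$, so the target length equals $h+N-1-w=\lceil(h+N-1)/2\rceil$.

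For the lower bound I would exhibit one explicit $1$-staircase of the desired length. A natural candidate starts at $(1,1)$, proceeds rightward across the first row of the upper-left triangle of $1$'s to $(1,h)$, jumps over the upper $0$-parallelogram to $(1,h+w+1)$, continues right to $(1,N)$, and finally descends the rightmost column to $(h,N)$. A direct count yields $h+(N-h-w)+(h-1)=h+N-1-w$ cells, matching the target.

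For the upper bound, I would first observe that $Q^{(n,N)}$ is self-complementary under a $180^\circ$ rotation, so $st_0(Q^{(n,N)})=st_1(Q^{(n,N)})$ and it suffices to bound the length of an arbitrary $1$-staircase $S$. Describe $S$ by the distinct rows it visits, $i_1<i_2<\cdots<i_t$, together with transition columns $c_0\leq c_1\leq\cdots\leq c_t$, where $c_{k-1}$ and $c_k$ are the first and last columns used in row $i_k$. The $1$'s of $Q^{(n,N)}$ split into three components: in an upper row $i\leq h$, the intervals $L_i=[1,h+1-i]$ and $R_i=[h+w+2-i,N]$, and in a lower row $i>h$, the parallelogram $P_i=[h+N-w-i+1,h+N-i]$. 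Classify each row of $S$ accordingly as type $L$, $R$, $L{+}R$ (an upper row that uses both $L_i$ and $R_i$, so the staircase skips the central $0$-parallelogram within that row), or $P$.

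The main structural step is to show that, under the hypothesis $N\geq\lfloor 5n/2\rfloor-1$, the row-types of $S$ obey the pattern $L^{*}(L{+}R)^{?}R^{*}P^{*}$, with at most one $L{+}R$ row. I would verify this by ruling out each forbidden row-to-row transition ($L$-to-$R$, $L$-to-$P$, $R$-back-to-$L$, or two successive $L{+}R$ rows) via the observation that the required transition column $c_k$ would have to lie in two disjoint sub-intervals of $[1,N]$. The tightest of these checks is the $L$-to-$P$ transition, which becomes infeasible precisely when $N-w\geq n$, equivalently $N\geq 2n+h-2$, and this is exactly the numerical inequality supplied by the hypothesis $N\geq\lfloor 5n/2\rfloor-1$ for both parities of $n$. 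I expect the verification of this equivalence (and the fact that the hypothesis is tight for it) to be the main bookkeeping obstacle.

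Once the type pattern is restricted, the length of $S$ admits the bound
\[
|S|\leq(c_t-c_0)+t-w\cdot[\,S\text{ contains an }L{+}R\text{ row}\,],
\]
since every non-$L{+}R$ row contributes at most $c_k-c_{k-1}+1$ cells, while an $L{+}R$ row contributes $w$ fewer (because of the skipped $0$-parallelogram). If $S$ has no $L{+}R$ row, then it lies either entirely in the upper-left triangle (giving $|S|\leq h$) or entirely in $R\cup P$ (giving $|S|\leq N-w-1$), and both quantities are $\leq\lceil(h+N-1)/2\rceil$. If $S$ does contain an $L{+}R$ row, then combining $c_0\geq 1$, $c_t\leq h+N-i_t$ (valid whether $i_t$ indexes an upper $R$-row or a lower $P$-row), and $t\leq i_t$ (since $i_1,\ldots,i_t$ are distinct positive integers all at most $i_t$) yields $|S|\leq h+N-1-w=\lceil(h+N-1)/2\rceil$, which completes the proof.
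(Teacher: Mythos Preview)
Your plan is workable but differs from the paper's and contains two slips worth noting.

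\textbf{The symmetry claim is false for odd $n$.} You assert that $Q^{(n,N)}$ is self-complementary under a $180^\circ$ rotation, so that $st_0=st_1$. This is only true when $n$ is even: for odd $n$ the two triangles have legs $\lceil n/2\rceil$ and $\lfloor n/2\rfloor$, which differ, and one checks directly (e.g.\ $n=3$, $N=6$) that $st_0\neq st_1$. The fix is easy---the $0$-staircase case really is analogous (the lower-right $0$-triangle plays the role of the upper-left $1$-triangle, and the same numerical inequality $N-w\geq n$ is what is needed)---but you should not invoke a symmetry that is not there.

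\textbf{A minor bound is misstated.} In the ``no $L{+}R$ row'' case you claim that a staircase lying entirely in $R\cup P$ has length at most $N-w-1$. This fails if the staircase lies entirely in $P$, where the length can be as large as $w$, and $w>N-w-1$ whenever $h\geq 1$. Fortunately $w\leq\lceil(h+N-1)/2\rceil$ as well, so your conclusion survives; just split that sub-case.

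\textbf{Comparison with the paper.} The paper's upper-bound argument is considerably shorter than your row-type analysis. It observes that, under the hypothesis $N\geq\lfloor 5n/2\rfloor-1$, the upper-left $1$-triangle lies entirely to the left of the lower $1$-parallelogram (this is exactly your inequality $N-w\geq n$, rewritten as $\lceil n/2\rceil+w+\lfloor n/2\rfloor\leq N$). Hence any $1$-staircase can leave that triangle only via a right step, and translating the triangle rightward by $w$ cannot decrease $st_1$. After the translation every $1$ of the new matrix satisfies $\lceil n/2\rceil+N-\lceil(h+N-1)/2\rceil+1\leq i+j\leq \lceil n/2\rceil+N$, a band of $\lceil(h+N-1)/2\rceil$ antidiagonals, which immediately bounds every $1$-staircase. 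Your structural classification ($L^{*}(L{+}R)^{?}R^{*}P^{*}$) is a correct substitute, and it makes explicit where the hypothesis is used, but it is doing by hand what the translation trick accomplishes in one line.
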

\begin{proof} The 1's in the first row and last column form a 1-staircase of length $\left\lceil\frac{\lceil n/2\rceil+N-1}2\right\rceil$.
We show that there are no longer homogeneous staircases in $Q^{(n,N)}$. We prove this for 1-staircases only, the case of 0-staircases is analogous.\par
The $\lceil n/2\rceil$-th column, i.e.\ the column of the rightmost 1 in the upper left corner, precedes the column of the first 1 of the last row.
(This can be seen by verifying that
\[
\left\lceil \frac n2\right\rceil+\left\lfloor\frac{\lceil n/2\rceil+N-1}2\right\rfloor+\left\lfloor \frac n2\right\rfloor \leq N
\]
holds for $N\geq\lfloor\frac{5}{2}n\rfloor-1$.) This means that every 1-staircase that starts from the upper left triangle,
can only leave this triangle with a right step; which implies that if we translate this triangle of 1's to the right without overlapping any other 1's,
the maximal length of 1-staircases does not decrease. We can translate this triangle to the right by $\left\lfloor\frac{\lceil n/2\rceil+N-1}2\right\rfloor$ positions
(the width of the upper parallelogram of 0's) without overlapping. It is easy to check that in the obtained matrix $\widetilde Q$, all the 1's lies in the region
\[
\left\lceil\frac n2\right\rceil+N-\left\lceil\frac{\lceil n/2\rceil+N-1}2\right\rceil+1\leq i+j\leq \left\lceil\frac n2\right\rceil+N.
\]
Since in a staircase each step increases $i+j$, the sum of the ``coordinates'' of the actual position, it is obvious that there is no
1-staircase in $\widetilde Q$ with length greater than $\left\lceil\frac{\lceil n/2\rceil+N-1}2\right\rceil$, as required.
\end{proof}

Theorem~\ref{tetel_osszeg} says that $st_0(M)+st_1(M)\geq\lceil\frac n2\rceil+N-1$, and so $st(M)\geq\left\lceil\frac{\lceil n/2\rceil+N-1}2\right\rceil$ holds
for every $M\in\set{0,1}^{n\times N}$. Hence we always have $st(n,N)\geq\left\lceil\frac{\lceil n/2\rceil+N-1}2\right\rceil$.
Claim~\ref{konstrukcio1} says that $st(n,N)\leq\left\lceil\frac{\lceil n/2\rceil+N-1}2\right\rceil$, for all $N\geq\lfloor\frac{5}{2}n\rfloor-1$, so we obtained the following:
\begin{thm} For all $N\geq\lfloor\frac{5}{2}n\rfloor-1$,
\[
st(n,N)=\left\lceil\frac{\lceil n/2\rceil+N-1}2\right\rceil.
\]
\end{thm}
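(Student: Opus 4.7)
The plan is to observe that both inequalities of the claimed equality have already been established in the preceding material, so the theorem follows by combining them. I will therefore write the proof as a short synthesis that records the two directions and makes explicit the arithmetic linking them.

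For the lower bound, I would invoke Theorem~\ref{tetel_osszeg}. For every $M\in\set{0,1}^{n\times N}$ it guarantees $\szumma(M)=st_0(M)+st_1(M)\geq\lceil n/2\rceil+N-1$. Since $st(M)=\max\set{st_0(M),st_1(M)}$, the larger of the two summands is at least their average, which after rounding up gives
\[
st(M)\geq\left\lceil\frac{st_0(M)+st_1(M)}{2}\right\rceil\geq\left\lceil\frac{\lceil n/2\rceil+N-1}{2}\right\rceil.
\]
Taking the minimum over all $M\in\set{0,1}^{n\times N}$ yields the lower bound $st(n,N)\geq\lceil(\lceil n/2\rceil+N-1)/2\rceil$, valid without any constraint on $N$.

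For the upper bound, I would appeal directly to Claim~\ref{konstrukcio1}: the construction $Q^{(n,N)}$, which is only defined under the hypothesis $N\geq\lfloor 5n/2\rfloor-1$, witnesses $st(Q^{(n,N)})=\lceil(\lceil n/2\rceil+N-1)/2\rceil$, and therefore $st(n,N)\leq\lceil(\lceil n/2\rceil+N-1)/2\rceil$ in the same range of $N$. Combining the two inequalities gives the stated equality.

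There is no real obstacle to overcome here, since the hard analytic work is already contained in Theorem~\ref{tetel_osszeg} and Claim~\ref{konstrukcio1}. The only small point that deserves a line in the write-up is the passage from $st_0+st_1\geq\lceil n/2\rceil+N-1$ to $\max\set{st_0,st_1}\geq\lceil(\lceil n/2\rceil+N-1)/2\rceil$; this uses that the integer maximum of two integers whose sum is at least $s$ is at least $\lceil s/2\rceil$, which is immediate but worth stating so that the ceiling on the right-hand side is justified. The hypothesis $N\geq\lfloor 5n/2\rfloor-1$ is needed solely to invoke the construction, not for the lower bound.
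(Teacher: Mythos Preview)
Your proposal is correct and matches the paper's own argument essentially line for line: the paper also derives the lower bound from Theorem~\ref{tetel_osszeg} via $st(M)\geq\lceil(st_0(M)+st_1(M))/2\rceil$ and the upper bound from Claim~\ref{konstrukcio1}. Your added remark justifying the ceiling in the passage from the sum to the maximum is a nice clarification that the paper leaves implicit.
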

\def\rondasag{\left\lceil\frac{2n+N-2}3\right\rceil}%
\begin{const}
For $n<N<\lfloor\frac{5}{2}n\rfloor-1$, we define the matrix $R^{(n,N)}$ as follows:
Let $R^{(n,N)}_{i,j}=1$ iff one of the following three possibilities holds:
\begin{itemize}
\item[(1)]
$i\leq \left\lceil \frac n2\right\rceil$ and $i+j\leq\left\lfloor\frac{N-n+2}3\right\rfloor+1$,
\item[(2)]
$i\leq \left\lceil \frac n2\right\rceil$ and $\left\lfloor\frac{N-n+2}3\right\rfloor+\rondasag+1 < i+j$,
\item[(3)]
$i>\left\lceil \frac n2\right\rceil$ and $n+\left\lfloor\frac{N-n+2}3\right\rfloor < i+j\leq n + N - \left\lceil\frac{N-n-1}3\right\rceil$.
\end{itemize}
\end{const}
\[
R^{(10,18)}=
\begin{matrix}
\phantom{0}\\
\phantom{0}\\
\begin{pmatrix}
\bovermat {$\left\lfloor\frac{N-n+2}3\right\rfloor$}{1&1&1}&\bovermat {$\rondasag$}{0&0&0&0&0&0&0&0&0&0&0&0}&\bovermat {$\left\lceil\frac{N-n-1}3\right\rceil$}{1&1&1}\\
1&1&0&0&0&0&0&0&0&0&0&0&0&0&1&1&1&1\\
1&0&0&0&0&0&0&0&0&0&0&0&0&1&1&1&1&1\\
0&0&0&0&0&0&0&0&0&0&0&0&1&1&1&1&1&1\\
0&0&0&0&0&0&0&0&0&0&0&1&1&1&1&1&1&1\\
0&0&0&0&0&0&0&1&1&1&1&1&1&1&1&1&1&1\\
0&0&0&0&0&0&1&1&1&1&1&1&1&1&1&1&1&1\\
0&0&0&0&0&1&1&1&1&1&1&1&1&1&1&1&1&0\\
0&0&0&0&1&1&1&1&1&1&1&1&1&1&1&1&0&0\\
\bundermat {$\left\lfloor\frac{N-n+2}3\right\rfloor$}{0&0&0}&\bundermat {$\rondasag$}{1&1&1&1&1&1&1&1&1&1&1&1}&\bundermat {$\left\lceil\frac{N-n-1}3\right\rceil$}{0&0&0}
\end{pmatrix}\\
\phantom{0}\\
\phantom{0}
\end{matrix}
\begin{matrix}
\left.\begin{matrix}
{}\\{}\\{}\\{}\\{}
\end{matrix}\right\}\left\lceil \frac n2\right\rceil\\
\left.\begin{matrix}
{}\\{}\\{}\\{}\\{}
\end{matrix}\right\}\left\lfloor \frac n2\right\rfloor
\end{matrix}
\]
The reader should check that $\left\lfloor\frac{N-n+2}3\right\rfloor + \rondasag + \left\lceil\frac{N-n-1}3\right\rceil$ is always equal to~$N$,
and the conditions imply that $\left\lfloor\frac{N-n+2}3\right\rfloor\leq \left\lceil\frac n2\right\rceil$ and $\left\lceil\frac{N-n-1}3\right\rceil\leq\left\lfloor \frac n2\right\rfloor$,
as the above example suggests. After doing this and reading the proof of Claim~\ref{konstrukcio1}, it should be straightforward to verify the following:
\begin{cl}
For all $n<N<\lfloor\frac{5}{2}n\rfloor-1$,
\[
st(R^{(n,N)})=\rondasag.
\]
\end{cl}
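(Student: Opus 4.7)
Plan. My plan is to follow the same strategy as in Claim~\ref{konstrukcio1}. The lower bound is immediate: the $n$-th row of $R^{(n,N)}$ contains $w_2 := \rondasag$ consecutive $1$'s (in columns $\lfloor(N-n+2)/3\rfloor + 1$ through $\lfloor(N-n+2)/3\rfloor + w_2$), which is already a $1$-staircase of length $w_2$. For the matching upper bound I argue separately for $st_1$ and $st_0$; in each case the goal is to slide one corner triangle of $R^{(n,N)}$ into an adjacent region so that all $1$'s (resp.\ all $0$'s) of the modified matrix lie on at most $w_2$ consecutive anti-diagonals $d = i+j$. Since $d$ strictly increases along any staircase, this forces $st_1, st_0 \le w_2$ and completes the proof.

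For $1$-staircases, label the three $1$-regions of $R^{(n,N)}$ by $A$ (upper-left triangle, condition~(1)), $B$ (upper-right trapezoid, (2)) and $C$ (lower parallelogram, (3)), and set $w_1 := \lfloor(N-n+2)/3\rfloor$ and $w_3 := \lceil(N-n-1)/3\rceil$, so $w_1 + w_2 + w_3 = N$. Two short structural observations are needed: no $1$-staircase contains a direct $A \to C$ transition (the columns occupied by $A$ are all $\le w_1$, whereas every $1$ of $C$ sits in a column $\ge w_1 + 1$, ruling out a down-step; and $A, C$ lie in disjoint row-halves, ruling out a right-step), and every $A \to B$ transition has to be a right-step (a down-step would force $w_2 \le \lceil n/2\rceil - 2$, contradicting $w_2 \ge n$). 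As in the proof of Claim~\ref{konstrukcio1}, these facts let me translate $A$ rightward by $w_2$ positions without destroying any $1$-staircase: pure $A$-staircases simply shift, $A$-to-$A$ steps are unaffected, and $A$-to-$B$ right-steps remain right-steps (merely shorter). In the translated matrix the shifted $A$ occupies $d \in [w_2+2, w_1+w_2+1]$, while $B$ and $C$ still occupy $d \in [w_1+w_2+2, \lceil n/2\rceil + N]$ and $d \in [n+w_1+1, n+w_1+w_2]$, respectively; the two arithmetic facts $w_2 \ge n + w_1 - 1$ and $w_3 \le \lfloor n/2\rfloor$ show that all three intervals sit inside $[n+w_1+1, n+w_1+w_2]$, a window of size $w_2$, so $st_1(R^{(n,N)}) \le w_2$.

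For $0$-staircases I repeat the argument on the opposite diagonal. Label the three $0$-regions $P_1$ (upper parallelogram), $P_2$ (lower-left triangle) and $P_3$ (lower-right triangle); the parallel observations are that no $0$-staircase admits a direct $P_1 \to P_3$ transition (again by the same column/row-half check, using that $P_3$ needs $i+j > n+w_1+w_2$ while $P_1$ has $i+j \le w_1+w_2+1$) and that every $P_2 \to P_3$ transition is a right-step (a down-step would demand $w_2 < i'-i \le \lfloor n/2\rfloor - 1$, contradicting $w_2 \ge n$). Translating $P_3$ leftward by $w_2$ positions then preserves every $0$-staircase, and one checks (using the same two inequalities $w_2 \ge n + w_1 - 1$ and $w_3 \le \lfloor n/2\rfloor$, together with $w_1 \le \lceil n/2\rceil$ noted in the text) that all the $0$'s of the resulting matrix lie on diagonals $d \in [w_1+2, w_1+w_2+1]$, again a window of size $w_2$, so $st_0(R^{(n,N)}) \le w_2$.

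The main obstacle is the arithmetic, specifically the two inequalities $w_2 - w_1 \ge n - 1$ (so that the shifted $A$ fits inside the final $d$-window) and $w_3 \le \lfloor n/2\rfloor$ (so that $B$'s diagonals fit too). The first follows from $N \ge n$ together with a short case analysis on $(N-n) \bmod 3$; the second is where the hypothesis $N < \lfloor 5n/2\rfloor - 1$ is used, and again follows from a small case analysis on $(N-n) \bmod 3$ and the parity of $n$. Once these two inequalities are in hand, the rest of the argument — verifying that the triangle slide preserves staircases and reading off the $d$-windows region by region — is straightforward and completely analogous to the proof of Claim~\ref{konstrukcio1}.
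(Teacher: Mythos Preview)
Your proposal is correct and is exactly the argument the paper has in mind: the paper gives no detailed proof of this claim, merely instructing the reader to verify the arithmetic identities $w_1+w_2+w_3=N$, $w_1\le\lceil n/2\rceil$, $w_3\le\lfloor n/2\rfloor$ and then to repeat the translation argument from Claim~\ref{konstrukcio1}, which is precisely what you do. One tiny imprecision: in the $0$-staircase half you say the check uses ``the same two inequalities,'' but fitting the shifted $P_3$ into the window $[w_1+2,w_1+w_2+1]$ actually requires $w_2-w_3\ge n-1$ rather than $w_2-w_1\ge n-1$; this symmetric inequality holds by the identical $(N-n)\bmod 3$ case analysis, so the argument goes through unchanged.
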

\begin{cor}
For all $n<N<\lfloor\frac{5}{2}n\rfloor-1$,
\[
st(n,N)\leq\rondasag.
\]
\end{cor}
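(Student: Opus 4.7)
The corollary is an immediate consequence of the preceding claim: by the definition $st(n,N)=\min\set{st(M):M\in\set{0,1}^{n\times N}}$, the single matrix $R^{(n,N)}$ already witnesses $st(n,N)\leq\rondasag$. Only the upper-bound half of the claim is actually used, so the plan below focuses on proving $st(R^{(n,N)})\leq\rondasag$, reusing the translation trick from the proof of Claim~\ref{konstrukcio1}.

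First I would verify the three arithmetic preconditions that the paper asks the reader to check: $\lfloor(N-n+2)/3\rfloor+\rondasag+\lceil(N-n-1)/3\rceil=N$, $\lfloor(N-n+2)/3\rfloor\leq\lceil n/2\rceil$, and $\lceil(N-n-1)/3\rceil\leq\lfloor n/2\rfloor$. Each reduces to a finite case split on $(N-n)\bmod 3$ and the parity of $n$, and together they ensure that $R^{(n,N)}$ is well-defined (the three horizontal bands tile each row and the two corner triangles fit inside their halves). Write $a=\lfloor(N-n+2)/3\rfloor$, $b=\rondasag$, $c=\lceil(N-n-1)/3\rceil$, so that $a+b+c=N$.

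For the upper bound on $st_1(R^{(n,N)})$, name the three regions of $1$'s as $A$ (the upper-left triangle from~(1)), $B$ (the upper-right triangle from~(2)), and $C$ (the lower-middle parallelogram from~(3)). The key structural fact is that every column containing a $1$ of $A$ lies strictly to the left of every column containing a $1$ of $B\cup C$, so any $1$-staircase that meets $A$ must leave it through a right step. Translating $A$ rightward by exactly $b$ columns therefore does not decrease the length of the longest $1$-staircase, and condition~(1) shows that the shifted cells land on original $0$'s, so no existing $1$'s are hit. Using the identities of the previous paragraph one checks that in the new matrix $\widetilde R$, the $i+j$-values of translated $A$, of $B$, and of $C$ are all contained in the common interval $[n+a+1,\,n+a+b]$ of length exactly~$b$; since the $i+j$-coordinate strictly increases along a staircase, this gives $st_1(R^{(n,N)})\leq st_1(\widetilde R)\leq b=\rondasag$. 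The bound $st_0(R^{(n,N)})\leq\rondasag$ is identical: $R^{(n,N)}$ is invariant under $180^{\circ}$ rotation composed with the swap $0\leftrightarrow 1$, so the same argument applies to $0$-staircases.

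The main obstacle is the floor/ceiling bookkeeping: verifying that translated $A$, $B$ and $C$ really nest inside the common window $[n+a+1,\,n+a+b]$ under all three residues of $(N-n)\bmod 3$ and both parities of $n$. Once that finite verification is done the geometric content of Claim~\ref{konstrukcio1} transfers verbatim.
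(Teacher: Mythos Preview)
Your reduction of the corollary to the preceding claim is exactly what the paper does (the paper gives no separate argument either), and your sketch of the upper-bound half of the claim follows the intended translation trick from Claim~\ref{konstrukcio1}. The $1$-staircase part is correct, including the key inequality $b\ge n+a-1$ that pins the translated triangle $A$ inside the window $[n+a+1,n+a+b]$; this holds with equality precisely when $N-n\equiv 1,2\pmod 3$.

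There is one genuine slip. Your shortcut for $0$-staircases, namely that ``$R^{(n,N)}$ is invariant under $180^\circ$ rotation composed with $0\leftrightarrow 1$'', is false when $n$ is odd. For $n=5$, $N=7$ (so $a=c=1$, $b=5$) row $\lceil n/2\rceil=3$ reads $0000111$, while its reverse--complement is $0001111$. The asymmetry is structural: row $\lceil n/2\rceil$ is governed by the upper rules (1)--(2), not by (3), so for odd $n$ the two halves of $R^{(n,N)}$ are not exact complements of one another. The fix is easy: instead of invoking symmetry, run the same translation argument directly on the $0$'s, shifting the lower-right $0$-triangle (rows $n-c+1,\dots,n$, entries with $i+j>n+a+b$) leftward by $b$ columns. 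One checks, using the companion inequality $b\ge n+c-1$, that every $0$ of the translated matrix then satisfies $a+2\le i+j\le a+b+1$, giving $st_0(R^{(n,N)})\le b$ by the same antidiagonal count.
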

\section{
                Improved lower bound, the proof of Theorem 2
}

Let $M$ be an arbitrary \zo matrix of size $n\times n$.
Let $a_1$ be the last element of the first row.
Without loss of generality we can assume that $a_1=1$.
Let $a_2$ be the last $0$ in the first row.
Let $a_3$ be the top $0$ (the $0$ with the least first/row index)
in the last column.
Let $a_4$ denote the element/position in
the intersection of the column of $a_2$ and the row of $a_3$.

It is possible that $a_2$ is not well 
defined (the first row does not contain any $0$).
Since in this case Gy\'arf\'as' conjecture is obviously
true we assume that $a_2$ (and $a_3$ as well) is well defined.

\begin{equation*}
  \begin{blockarray}{ccccccc}
& & & & &  & \\
\begin{block}{(cccccc)c}
\bovermat{$x_1$ many $1$'s}{
        & \cdots}& 0_{a_2}      & 1 & \cdots & 1_{{a_1}}     & \\
        &        &             &   &        & 1           & \\
\vdots  &        & \vdots      &   &        & \vdots      & \\
        &        &             &   &        & 1           & \\
        &        &     ?_{a_4}  &   &        & 0_{a_3       } & \\
\vdots  &        & \vdots      &   &        & \vdots      & \}\text{ $w_1$ many $1$'s}\\
\end{block}  
\end{blockarray}
\end{equation*}
\begin{center}\begin{minipage}[c]{4in}
\footnotesize
The summary of our notations, with some references to future
parameters.
\end{minipage}\end{center}\smallskip

\begin{obs}
If $a_4=0$ then Gy\'arf\'as' conjecture is true for our
matrix.
\end{obs}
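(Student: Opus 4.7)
The plan is to exhibit, under the hypothesis $a_4=0$, both a long $1$-staircase and a long $0$-staircase whose lengths already sum to at least $2n$; the larger of them is then at least $n$, which is even stronger than the bound $n-1$ required by Gy\'arf\'as' conjecture. Every staircase I build will live inside the ``frame'' consisting of the first row, the last column, the column of $a_2$ and the row of $a_3$.

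Let $p$ be the number of $0$'s in the first row (so $p\geq 1$, with $a_2$ the last of them) and $q$ the number of $0$'s in the last column (so $q\geq 1$, with $a_3$ the first of them). Row~$1$ then contains $n-p$ ones and column~$n$ contains $n-q$ ones, sharing the corner $a_1=(1,n)$. Concatenating the row-$1$ ones with the column-$n$ ones yields a \jl-staircase of $1$'s of length $(n-p)+(n-q)-1=2n-p-q-1$.

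For the $0$-staircase I splice four hands into one monotone right/down sequence: the $p$ zeros of row~$1$ (terminating at $a_2$); the zeros in the column of $a_2$ from $a_2$ down to $a_4$ (at least the two endpoints, thanks to $a_4=0$); the zeros in the row of $a_3$ from $a_4$ right to $a_3$ (again at least the two endpoints); and the $q$ zeros of column~$n$ starting at $a_3$. The hypothesis $a_4=0$ is exactly what allows the two middle hands to exist and to glue the upper-left portion to the lower-right portion into a single staircase. Subtracting the three shared pivots $a_2,a_4,a_3$, the length is at least $p+2+2+q-3 = p+q+1$. One should also check that $a_2,a_3,a_4$ are pairwise distinct, which is immediate: the row of $a_3$ cannot be row~$1$ (otherwise $a_3=a_1$, contradicting $0\ne 1$), and the column of $a_2$ cannot be column~$n$ (otherwise $a_2=a_1$), so the column of $a_2$ lies strictly left of the column of $a_3$ and the row of $a_3$ lies strictly below the row of $a_2$.

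Adding the two length bounds gives exactly $2n$, hence
\[
st(M)\ \geq\ \max\bigl(2n-p-q-1,\ p+q+1\bigr)\ \geq\ n,
\]
which is even stronger than the $n-1$ required. There is no real obstacle in the argument beyond bookkeeping the overlaps at the four distinguished corners $a_1,a_2,a_3,a_4$ when merging the hands into a single monotone staircase.
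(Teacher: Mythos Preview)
Your proof is correct and follows the same approach as the paper: you build the identical $1$-staircase (the $1$'s in the first row and last column, turning at $a_1$) and the identical $0$-staircase (the $0$'s threading through $a_2$, $a_4$, $a_3$), and observe that their lengths sum to at least $2n$. The paper states this more tersely without introducing the parameters $p,q$, but the two arguments are the same.
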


\begin{proof}
We consider two staircases: (1) The one that is
formed by the $1$'s in the first row and the $1$'s in the last column.
(2) The one that is formed by the $0$'s of the first row, $a_4$,
the $0$'s above and on its right side of $a_4$, and the $0$'s of the last column.
The staircase described in (1) is a $1$-staircase,
and (2) defines a $0$-staircase.

\begin{equation*}
\left(\begin{array}{cccccc}
\lepc  &\lepc \cdots &\lepc 0           &\lepc 1 &\lepc \cdots &\lepc 1\\
       &             &                  &        &             &\lepc 1 \\
\vdots &             & \vdots           &        &             &\lepc \vdots \\
       &             &                  &        &             &\lepc 1      \\
       &             &     0            &        &             &\lepc 0 \\
\vdots &             &\vdots            &        &             &\lepc \vdots \\
  \end{array}\right)
\qquad\qquad\qquad
\left(\begin{array}{cccccc}
\lep        &\lep \cdots &\lep 0      & 1 & \cdots & 1      \\
            &            &\lep        &   &        & 1      \\
\vdots      &            &\lep \vdots &   &        & \vdots \\
            &            &\lep        &   &        & 1      \\
            &            &\lep 0      &\lep &\lep  &\lep 0  \\
\vdots      &            &\vdots      &     &      &\lep \vdots\\
  \end{array}\right)
\end{equation*}
\begin{center}\begin{minipage}[c]{4in}
\footnotesize
The highlighted elements just show the shape of the staircase. For
the staircase we must consider only the elements with the same
value as the turning points have.
\end{minipage}\end{center}\smallskip

It is easy to check that the sum of the length of the
above two staircases is at least $2n$. Hence the 
longer one proves the conjecture.
\end{proof}

In the rest of the proof we assume that
$a_4=1$.
Let $S_i$ denote the \jl-staircase centered at $a_i$
($i=1,2,3,4$).
Let $s_i=|S_i|$.

\begin{equation*}
S_1=  \left(\begin{array}{cccccc}
\lepc  &\lepc \cdots &\lepc 0 &\lepc 1 &\lepc \cdots &\lepc 1 \\
       &             &                  &        &             &\lepc 1 \\
\vdots &             & \vdots           &        &             &\lepc \vdots \\
       &             &                  &        &             &\lepc 1      \\
       &             &     1            &        &        &\lepc 0 \\
\vdots &             &\vdots            &        &        &\lepc \vdots      \\
  \end{array}\right)
\qquad\qquad\qquad
S_2=  \left(\begin{array}{cccccc}
\lep        &\lep \cdots &\lep 0 & 1 & \cdots & 1           \\
            &            &\lep             &   &        & 1           \\
\vdots      &            &\lep \vdots      &   &        & \vdots      \\
            &            &\lep             &   &        & 1           \\
            &            &\lep     1       &   &        & 0 \\
\vdots      &            &\lep \vdots      &   &        & \vdots      \\
  \end{array}\right)
\end{equation*}

\begin{equation*}
S_3=  \left(\begin{array}{cccccc}
        & \cdots & 0      & 1      & \cdots & 1                  \\
        &        &                  &        &        & 1                  \\
\vdots  &        & \vdots           &        &        &\vdots              \\
        &        &                  &        &        &1                   \\
\lepcso &\lepcso &\lepcso   1       &\lepcso &\lepcso &\lepcso 0 \\
\vdots  &        &\vdots            &        &        &\lepcso \vdots      \\
  \end{array}\right)
\qquad\qquad\qquad
S_4=  \left(\begin{array}{cccccc}
            & \cdots     & 0       & 1 & \cdots & 1           \\
            &            &                   &   &        & 1           \\
\vdots      &            & \vdots            &   &        & \vdots      \\
            &            &                   &   &        & 1           \\
\lepcs      &\lepcs      &\lepcs     1       &   &        & 0 \\
\vdots      &            &\lepcs \vdots      &   &        & \vdots      \\
  \end{array}\right)
\end{equation*}

\begin{obs}
\[
2s_1+s_2+s_3+s_4=4n-3+x_1+y_0+z_0+w_1.
\]
where
$x_1$ denotes the number of $1$'s in the first row before $a_2$,
$y_0$ denotes the number of $0$'s in the column of $a_2$ between $a_2$ and $a_4$,
$z_0$ denotes the number of $0$'s in the row of $a_3$ between $a_4$ and $a_3$,
and
$w_1$ denotes the number of $1$'s in the last column under $a_3$.
\end{obs}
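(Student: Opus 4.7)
My plan is to verify this arithmetic identity by computing each $s_i$ directly from the geometric picture and then summing, with two auxiliary parameters that will cancel. Fix coordinates $a_1=(1,n)$, $a_2=(1,c_2)$, $a_3=(r_3,n)$, and $a_4=(r_3,c_2)$. Each $S_i$ is the longest \jl-staircase of the appropriate color whose turn lies at $a_i$: its horizontal hand runs left from $a_i$ along the row of $a_i$, and its vertical hand runs down from $a_i$ along the column of $a_i$. For maximum length each hand collects every cell of the matching color along its row/column segment, so $s_i$ equals (cells of the right color in the horizontal segment) $+$ (cells of the right color in the vertical segment) $-1$, with the $-1$ correcting for the double-counted turning cell $a_i$.

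Next I would express these counts in terms of the given parameters. The definitions of $a_1$ and $a_2$ force every entry of row $1$ strictly right of $a_2$ to be a $1$, so the horizontal hand of $S_1$ contains all $1$'s of row $1$ (giving $x_1+(n-c_2)$ cells) while the horizontal hand of $S_2$ contains all $0$'s of row $1$ (giving $c_2-x_1$ cells); by the analogous argument using $a_1$ and $a_3$, the vertical hand of $S_1$ contains every $1$ of column $n$ and the vertical hand of $S_3$ contains every $0$ of column $n$. The vertical hand of $S_2$ and the horizontal hand of $S_3$ run all the way past $a_4$ (a staircase may skip over a cell of the wrong color), so I would introduce two auxiliary quantities: $u_0$ for the number of $0$'s strictly below $a_4$ in column $c_2$, and $v_0$ for the number of $0$'s strictly left of $a_4$ in row $r_3$. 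Then the vertical hand of $S_2$ has $1+y_0+u_0$ cells and the horizontal hand of $S_3$ has $v_0+z_0+1$ cells. In contrast, $S_4$'s hands are genuinely truncated at $a_4$: the horizontal hand uses only the $c_2-v_0$ ones in row $r_3$ at columns $\le c_2$, and the vertical hand uses only the $n-r_3+1-u_0$ ones in column $c_2$ at rows $\ge r_3$.

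Once each $s_i$ is written as an explicit linear expression in $n, c_2, r_3, x_1, y_0, z_0, w_1, u_0, v_0$, the last step is pure arithmetic: add $2s_1+s_2+s_3+s_4$ and collect coefficients. The key cancellations are that $u_0$ appears with opposite signs in $s_2$ and $s_4$, $v_0$ appears with opposite signs in $s_3$ and $s_4$, and the coordinates $c_2$ and $r_3$ also cancel, leaving exactly $4n-3+x_1+y_0+z_0+w_1$. The main obstacle is not insight but combinatorial bookkeeping: correctly cataloguing the cells in each hand, handling the four corner cells $a_1,a_2,a_3,a_4$ with the right multiplicities, and remembering that the hands of $S_2$ and $S_3$ really extend across $a_4$ while those of $S_4$ stop at $a_4$. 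Once that accounting is right, the identity falls out from straightforward algebra.
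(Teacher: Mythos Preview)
Your proposal is correct and follows essentially the same direct-counting approach as the paper: both compute $2s_1+s_2+s_3+s_4$ by tallying the cells of the appropriate color in each hand of each $S_i$ and then cancelling. The only cosmetic difference is bookkeeping: you introduce explicit coordinates $c_2,r_3$ and auxiliary counts $u_0,v_0$ (zeros below/left of $a_4$) and let them cancel algebraically, whereas the paper avoids naming these by pairing the summands into two blocks that total $2n-1$ and $2n-2$ respectively, with $x_1,y_0,z_0,w_1$ as the leftover terms.
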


\begin{proof}
$s_1$, the number of $0$'s in the horizontal hands of $S_2$,
and the number of $0$'s in the vertical hands of $S_3$
add up to $2n-1$.

$s_4$, the number
the number of $0$'s before $a_4$,
and the number of $0$'s under $a_4$ (these $0$'s are counted
in $s_2$ and $s_3$)
add up to the number of the shaded positions in the figure of $S_4$.
The second $s_1$ term counts the $1$'s in last block of $1$'s of the first row 
and the $1$'s in the top block of $1$'s of the last column.
The considered contribution of the sum of length give us
$2n-2$.

It is easy to see that
the not counted contribution of the staircases
gives us the last four terms on the right hand side.
\end{proof}

We can repeat the same argument with exchanging the role
of rows and columns.
Then the role of $a_1$ will be played by the last element of the first column
(i.e.~the first element of the last row). Let $a$ be the value of this element.
$\overline a$ denotes $1-a$.

\begin{equation*}
  \begin{blockarray}{ccccccc}
\begin{block}{c(cccccc)}
\text{$w_a'$ many $a$'s }\{
 &\vdots &          &    &\vdots       &            &\vdots \\
 &\overline a_{{a_3}'}&\hdots&    & a_{{a_4}'}           & \hdots     &            \\
 & a  &             &    &             &            & \\
 &\vdots &          &    &\vdots       &            &\vdots \\
 & a  &             &    &             &            & \\
 & a_{{a_1}'}  & \hdots      & a  &\overline a_{{a_2}'}  \bundermat{\text{$x_a'$ many $a$'s}}{& \hdots} &\\
\end{block}  
 &    &             &    &             &            & \\
\end{blockarray}
\end{equation*}

All previous notations can be introduced in this setting
(we use the same letters and a '
to distinguish from the originals). Our previous observation leads
to
\[
2{s_1}'+{s_2}'+{s_3}'+{s_4}'=4n-3+{x_a}'+{y_{\overline a}}'+{z_{\overline a}}'+{w_a}'.
\]

The order of magnitude of Gy\'arf\'as' original 
bound is already proven by our observations.
We see some additional terms too, but the usage of them requires
a case analysis.

\noindent{\bf Case~1:\ }
$s_1+s_a'\geq 2n-2$.
In this case the longer of $S_1$ and ${S_1}'$
proves Gy\'arf\'as' conjecture and we are done.

\noindent{\bf Case~2:\ }
$s_1+{s_a}'< 2n-2$. 

$S_1$ is a \jl-staircase, $s_1$ is its size.
Let $s_1^-$ denote the size of the horizontal hand of $S_1$,
and let $s_1^\medvert$ denote the size of
its vertical hand. We introduce similar notation for its symmetric
pair, the staircase, broken at the left bottom element of $M$  
(its size is denoted by ${s_a}'$).

Since
$s_1=s_1^-+s_1^\medvert-1$ and
${s_a}'={s_a^-}'+{s_a^\medvert}'-1$, we have 
\[
2n-2>s_1+{s_a}'=(s_1^-+s_1^\medvert-1)+({s_a^-}'+{s_a^\medvert}'-1)
=(s_1^-+{s_a^-}')+(s_1^\medvert+{s_a^\medvert}')-2.
\]

Without loss of generality we can assume, that 
$s_1^-+{s_a^-}'<n$.
It is obvious that we are guaranteed to have
a column in our matrix that has a $0$
in the first/top position, $a_5$ and has an $\overline a$
at the last/bottom position, $a_6$.
In the next picture we shaded this column and extended it
to a staircase shape with further
shaded positions (we are talking about staircase SHAPE,
not about staircase!):
\begin{equation*}
\left(\begin{array}{ccccccccccc}
\lep &\lep &\lep &\lep &\lep \cdots 
                         &\lep 0_{a_5}      &\cdots&0_{a_2}& 1 & \cdots &  1   \\
            &  &  &  &      
                         &\lep        &   & & &       &                \\
\vdots      &  &  &  &      
                         &\lep\vdots  &   & & &       & \vdots         \\
            &  &  &  &      
                         &\lep        &   & & &       &                \\
a & \cdots & a&{\overline a}_{{a_3}'}       & \cdots 
                         &\lep\overline{a}_{a_6} &\lep\cdots   &\lep &\lep &\lep &\lep\\
  \end{array}\right)
\end{equation*}

We define two staircases among the shaded elements.
We distinguish two subcases:

\noindent{\bf Subcase~1:}
$0=\overline a$.
Let $S_5$ be the sequence of shaded $0$'s and
let $S_6$ be the $1$'s of the shaded column.

\noindent{\bf Subcase~2:}
$0\not=\overline a$.
Let $S_5$ be a \jl-staircase centered at $a_5$, and
Let $S_6$ be a \lj-staircase centered at $a_6$.

In both cases $|S_5|+|S_6|$ counts all shaded
elements (that is $2n-1$ elements)
except the shaded $1$'s in the first row and the shaded $a$'s in the last row.
We can upper bound their number with $x_1+{x_a}'$.
We obtained that
\[
|S_5|+|S_6|\geq 2n-1-x_1-{x_a}'.
\]
Hence
\[
2s_1+s_2+s_3+s_4+
2{s_1}'+{s_2}'+{s_3}'+{s_4}'+|S_5|+|S_6|
\geq
10n-7+y_0+z_0+w_1+{y_{\overline a}}'+{z_{\overline a}}'+{w_a}'.
\]
We investigated $10$ staircases. 
A weighted average of their length is at least
$1/12(10n-7)=5n/6-7/12$. So the longest staircase 
in our proof proves the theorem.

\begin{remark}
Before we started the main streamline of the proof we excluded
$a_4=0$ (and later we also excluded ${a_4}'=\overline a$). 
There, we used staircases $S$ such that $\turn(S)=3$.
All the other staircases in our proof have turning points at most $2$.
So our theorem can be stated in a stronger form:
{\it Any $M\in\set{0,1}^{n\times n}$ contains a staircase
that has length at least $5n/6-7/12$, and it has at most $3$ turning points.}

Gy\'arf\'as exhibited an example that shows that his
$4n/5$ cannot be improved if we use only staircases
with at most $1$ turning point.

We do not know any bound for the case when we
are allowed to use only staircases with at most $2$ turning points.
\end{remark}

\enddocument